\numberwithin{equation}{section}
\theoremstyle{plain}
\newtheorem{Proposition}[equation]{Proposition}
\newtheorem{Corollary}[equation]{Corollary}
\newtheorem*{Corollary*}{Corollary}
\newtheorem{Theorem}[equation]{Theorem}
\newtheorem*{Theorem*}{Theorem}
\newtheorem{Lemma}[equation]{Lemma}
\theoremstyle{definition}
\newtheorem{Remark}[equation]{Remark}
\setlist[enumerate,1]{label=(\alph*),font=\upshape}
\setlist[enumerate,2]{label=(\roman*),font=\upshape}
\def\HH{\mathscr{H}}
\def\MM{\mathscr{M}}
\def\Mult{\mathfrak{M}}
\def\NN{\mathcal{N}}
\def\h{\mathcal{H}}
\def\C{\mathbb{C}}
\def\D{\mathbb{D}}
\def\T{\mathbb{T}}
\def\phi{\varphi}
\renewcommand{\ker}{\operatorname{Ker}}
\newcommand{\beqa}{\begin{eqnarray*}}
\newcommand{\eeqa}{\end{eqnarray*}}
\renewcommand{\leq}{\leqslant}
\renewcommand{\le}{\leqslant}
\renewcommand{\subset}{\subseteq}
\title[Smirnov class]{The Smirnov class for de Branges--Rovnyak  spaces}
\author[Fricain]{Emmanuel Fricain}
 \address{Laboratoire Paul Painlev\'e, Universit\'e Lille 1, 59 655 Villeneuve d'Ascq C\'edex }
 \email{emmanuel.fricain@math.univ-lille1.fr}
\author[Hartmann]{Andreas Hartmann}
\address{Institut de Math\'ematiques de Bordeaux, Universit\'e Bordeaux 1, 351 cours de la Lib\'eration 33405 Talence C\'edex, France}
\email{Andreas.Hartmann@math.u-bordeaux1.fr}
\author[Ross]{William T. Ross}
	\address{Department of Mathematics and Computer Science, University of Richmond, Richmond, VA 23173, USA}
	\email{wross@richmond.edu}
		\author[D.~Timotin]{Dan Timotin}
	\address{Simion Stoilow Institute of Mathematics of the Romanian Academy, PO Box 1-764, Bucharest 014700, Romania}
	\email{Dan.Timotin@imar.ro}
\thanks{The first author was supported by the Labex CEMPI (ANR-11-LABX-0007-01) and the project FRONT (ANR-17-CE40-0021). }
\keywords{Smirnov class, multipliers, de Branges Rovnyak spaces}
\subjclass[2010]{30J05, 30H10, 46E22}
\begin{document}

\begin{abstract}
Using an explicit construction, we show that the de Branges-Rovnyak spaces $\mathscr{H}(b)$, corresponding to rational $b$, are contained in their associated Smirnov classes.
\end{abstract}

\maketitle

\section{Introduction}


Let $H^2$ denote the Hardy space of analytic functions $f$ on the open unit disk $\D$ for which 
$$\sup_{0 < r < 1} \int_{\T} |f(r \xi)|^2 dm(\xi)$$ is finite. In the above, $\T = \partial \D$ is the unit circle and $m$ is normalized Lebesgue measure on $\T$. One can see that $H^2$ contains $H^{\infty}$, the bounded analytic functions on $\D$, and that any $\phi \in H^{\infty}$ satisfies $\phi H^{2} \subset H^2$. In fact, these are the only analytic functions on $\D$ which multiply $H^2$ into itself. Furthermore, $\phi H^2$ is dense in $H^2$ if and only if $\phi$ is a bounded outer function.  A well-known theorem of Smirnov \cite[p.~16]{Duren} says that every function $f \in H^2$ can be written as 
$f = \phi/\psi,$
where $\phi, \psi \in H^{\infty}$ and $\psi$ is outer.

Taking inspiration from $H^2$, one can define, for a general reproducing Hilbert space $\h$ of analytic functions on $\D$, the {\em Smirnov class} of $\h$, denoted by $\NN^{+}(\h)$, to be the set of quotients 
$\phi/\psi$,
where $\phi$ and $\psi$ are multipliers of $\h$ ($\phi \h \subset \h$ and $\psi \h \subset \h$) and the denominator $\psi$ is a cyclic multiplier ($\psi \h$ is dense in $\h$). We say that $\mathcal{H}$ satisfies
the {\em Smirnov property} if $\mathcal{H}\subset \mathcal{N}^+(\mathcal{H})$.
 In general, multipliers of $\h$ belong to $H^{\infty}$ but not every $H^{\infty}$ function is a multiplier of $\h$. Furthermore, the density of $\psi \h$ in $\h$ is usually more complicated than the simple criterion that $\psi$ is a bounded outer function. Indeed, when the polynomials are dense in $\mathcal H$ and the shift operator $S_{\mathcal H}$ (the multiplication by $z$) acts as a bounded operator on $\mathcal H$, then $\psi \mathcal H$ is dense in $\mathcal H$ if and only if 
$\psi$ is a cyclic vector for $S_{\mathcal H}$. This problem of cyclicity is known to be notoriously difficult and is only completely understood for a handful number of cases. 

A remarkable result in \cite{MR3687947} says that reproducing kernel Hilbert  spaces $\h$ satisfying the complete Nevanlinna--Pick property, along with the normalizing condition $k_{z_0}(z)=1$ for all $z\in\mathbb D$ and some $z_0\in\mathbb D$, have the property that $\h \subset \NN^{+}(\h)$. Two related papers \cite{MR4096723, MR1911187} show that the harmonically weighted Dirichlet spaces $D(\mu)$ and a class of  de Branges--Rovnyak spaces $\HH(b)$ enjoy the complete Nevanlinna-Pick property  and the above mentioned normalizing condition and thus belong to their associated Smirnov classes. The purpose of this paper is to give an explicit construction which proves the Smirnov property for a large class of $\mathscr{H}(b)$ spaces including the results mentioned above.

 To state our main theorem (Theorem \ref{maintheorem}), we make the following definitions. Let
$$H^{\infty}_{1} = \Big\{\phi \in H^{\infty}: \|\phi\|_{\infty} :=  \sup_{z \in \D} |\phi(z)| \leq 1\Big\}$$
denote the closed unit ball of $H^{\infty}$ and for $b \in H^{\infty}_{1}$ the {\em de Branges--Rovnyak space} $\HH(b)$ is the reproducing kernel Hilbert space of analytic functions whose kernel is 
$$k^{b}_{\lambda}(z) = \frac{1 - b(z) \overline{b(\lambda)}}{1 - \overline{\lambda} z}, \quad \lambda, z \in \D.$$
See \cite{FM1, FM2, Sa} for the details concerning  $\mathscr{H}(b)$ spaces.
Recall that $f \in H^{\infty}$ is an {\em inner function} if the radial boundary function $f(e^{i \theta})$ satisfies $|f(e^{i \theta})| = 1$ almost everywhere with respect to normalized Lebesgue measure $m $ on the unit circle $\T$. An $f \in H^{\infty}$ is {\em outer} if 
$$f(z)= \exp\Big(\int_{\T} \frac{\xi + z}{\xi - z} \log |f(\xi)| dm(\xi)\Big), \quad z \in \D.$$ 
 The main result of our paper is the following. 

\begin{Theorem}\label{maintheorem}
If $b \in H^{\infty}_{1}$ is rational but not inner, then $\HH(b) \subset \NN^{+}(\HH(b))$.
\end{Theorem}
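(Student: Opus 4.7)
The strategy is to exploit the rationality of $b$ to produce, for every $f\in\HH(b)$, an explicit factorisation $f=\phi/\psi$ with $\psi$ an outer polynomial and $\phi\in M(\HH(b))$. Write $b=p/q$ in lowest terms with $p,q$ coprime polynomials, $\deg p\le\deg q$, and $q$ non-vanishing on $\ov\D$. Since $b$ is non-inner and rational, $\log(1-|b|^2)\in L^1(\T)$, so $b$ is a non-extreme point of $H^\infty_1$; consequently polynomials are dense in $\HH(b)$ and $S_{\HH(b)}$ is bounded. In particular, every polynomial is a multiplier of $\HH(b)$, and an outer polynomial (zeros lying on $\T$ only) can be shown to be a cyclic multiplier. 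The finite set $E=\{\zeta\in\T:|b(\zeta)|=1\}$ will control the whole analysis, together with the rational Pythagorean mate $a=r/q$ for which $|a|^2+|b|^2=1$ on $\T$ and $r$ is a polynomial whose zeros in $\ov\D$ all lie in $E$.

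The heart of the proof is to show that every $f\in\HH(b)$ extends meromorphically to a neighbourhood of $\ov\D$, holomorphically off $E$ and with poles in $E$ of order at most some integer $m$ depending only on $b$. The input is Sarason's characterisation
$$f\in\HH(b) \iff T_{\bar b}f=T_{\bar a}g \text{ for some } g\in H^2.$$
Multiplying this relation through by $q$ clears denominators and yields a polynomial-coefficient identity on $\T$ relating $f,g,p,q,r$. Since $b$ and $a$ extend analytically across $\T\setminus E$, this identity forces $f$ to extend analytically across $\T\setminus E$, while the orders of vanishing of $r$ at the points of $E$ control, uniformly in $f$, the maximal pole order of the extension of $f$ at each $\zeta\in E$.

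With this bound in hand, set $\psi(z)=\prod_{\zeta\in E}(1-\bar\zeta z)^m$. Then for every $f\in\HH(b)$, the product $\phi=\psi f$ is holomorphic in a neighbourhood of $\ov\D$, so its Taylor coefficients decay geometrically; combined with the boundedness of $S_{\HH(b)}$, a standard approximation argument by polynomial partial sums of $\phi$ yields $\phi\in M(\HH(b))$. Since $\psi$ is an outer polynomial, hence a cyclic multiplier of $\HH(b)$, the factorisation $f=\phi/\psi$ puts $f$ into $\NN^+(\HH(b))$. The main obstacle will be the uniform bound on the boundary pole orders: isolating the correct $m$ requires a careful use of the polynomial identities forced by rationality and of the interplay between the zeros of $r$ and the structure of the equation $T_{\bar b}f=T_{\bar a}g$. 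Once this analysis is in place, the remaining multiplier/cyclicity arguments are essentially formal.
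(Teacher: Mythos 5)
Your proposal founders on its central claim: it is not true that every $f\in\HH(b)$ extends meromorphically to a neighbourhood of $\ov\D$ with singularities confined to $E=\{\zeta\in\T:|b(\zeta)|=1\}$. For rational non-inner $b$ one has $\HH(b)=\MM(\overline{a_1})=a_1H^2\dotplus\mathscr{P}_{N-1}$, where $a_1$ is the polynomial carrying the zeros of the Pythagorean mate $a$ on $\T$; in particular $\HH(b)\supset a_1H^2$. Taking $h\in H^2$ with natural boundary on $\T$ (a lacunary series, say), the function $a_1h\in\HH(b)$ does not continue analytically across any arc of $\T$, let alone across $\T\setminus E$. Sarason's criterion $T_{\bar b}f=T_{\bar a}g$ only says that $\bar bf-\bar ag$ is orthogonal to $H^2$ on $\T$; ``multiplying through by $q$'' does not commute with the Riesz projection hidden in the Toeplitz operators and yields no analytic continuation. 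The same obstruction kills the endgame: if a single polynomial $\psi$ depending only on $b$ satisfied $\psi f\in\Mult(\HH(b))\subset H^\infty$ for all $f\in\HH(b)$, then every $f\in\HH(b)$ would be bounded off neighbourhoods of the zeros of $\psi$, which fails for $f=a_1h$ with $h\in H^2$ unbounded near a point of $\T$ where $a_1\neq0$. So no fixed cyclic multiplier can serve as a universal denominator; the denominator must genuinely depend on $f$.

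The paper keeps your outer framework of ``multiplier over cyclic multiplier'' but builds the denominator from $f$ itself. Writing $f=a_1h+q$ with $h\in H^2$ and $q\in\mathscr{P}_{N-1}$, it constructs from the Herglotz integral of $|a_1h|$ functions $u,v\in H^2$ with $a_1u,\,a_1v\in H^\infty$ and $a_1v+1$ outer such that $h=u/(a_1v+1)$ (Theorem \ref{Thm:hardy-factorization}); then $a_1h=(a_1u)/(a_1v+1)$ exhibits $a_1h$ as a quotient of multipliers of $\MM(\overline{a_1})$ with cyclic denominator, and the polynomial part is handled trivially. The parts of your outline that do survive are the peripheral ones: polynomials are multipliers, $\NN^+(\HH(b))$ depends only on $\HH(b)$ as a set, and cyclicity of a multiplier amounts to being outer together with a coprimality condition at the zeros of $a_1$.
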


We discuss an extension of this result when $b$ is outer in Corollary \ref{outerrrrr}. Note that if $b$ is rational and inner, it must be a finite Blaschke product.  In this situation $\HH(b)$ becomes a model space $(b H^2)^{\perp}$ (in fact a finite dimensional one) and the multipliers of any model space are just the constant functions \cite{MR3720929}. Thus, Theorem \ref{maintheorem} is no longer true. The multipliers of $\HH(b)$ for general $b \in H_{1}^{\infty}$ were studied in \cite{MR1098860, FM2, MR1254125, MR1614726} but are far from being completely understood.

Isolated cases of Theorem \ref{maintheorem} can be gleaned from results in \cite{MR3687947, MR4096723, MR1911187}. 
More precisely, when $b$ is rational and not inner, there is a unique rational function $a$ such that $|a|^2+|b|^2=1$ almost everywhere on $\T$ and $a(0)>0$. First, one can prove that if the zeros of $a$ on $\mathbb T$ are all simple, then $\HH(b)$ coincides with a $\mathcal D(\mu)$ space (with equivalent norms) \cite{MR3110499} and so, by \cite{MR3687947} and \cite{MR1911187},  $\HH(b) \subset \NN^{+}(\HH(b))$. Second, if $b(0)=0$ and there exists an analytic function $h$ on $\mathbb D$ such that $h(b(z))=z$ for all $z\in\mathbb D$ and the function $(z-b(0))/h(z)$ extends to be analytic on $\mathbb D$ with
\[
\left|\frac{z-b(0)}{h(z)}\right|\leq |1-\overline{b(0)}z|, \quad \mbox{for $z \in\mathbb D$},
\] 
then  by \cite{MR4096723} $\mathscr{H}(b)$  has the compete Nevanlinna Pick property; since $k^b_0\equiv 1$, \cite{MR3687947} implies that $\HH(b)\subset \NN^+(\HH(b))$. In particular, the hypothesis of the above results exclude a large class of rational $b$.

One of the main reasons for this paper is to prove Theorem~\ref{maintheorem} directly for all rational $b$ (not just the cases mentioned in the previous paragraph), and  only using the tools of $\HH(b)$ spaces. In addition, our construction is explicit.

The main idea that makes all of this work is that when $b \in H_{1}^{\infty}$ is rational and not inner, then $\HH(b) = \MM(\overline{a})$ (with equivalent norms), where $\MM(\overline{a}) = T_{\overline{a}} H^2$ (equipped with the range norm), $a \in H^{\infty}_{1}$ satisfies $|a|^2 + |b|^2 = 1$ almost everywhere on $\T$, and $T_{\overline{a}}$ is the co-analytic Toeplitz operator on $H^2$. 
Furthermore, we know the exact structure of $\MM(\overline{a})$ (and hence $\HH(b)$), as well as its multiplier algebra.  Along the way to proving Theorem \ref{maintheorem}, we prove the following curious representation theorem for $H^2$ functions. 

\begin{Theorem}\label{Thm:hardy-factorization}
If $p$ is a polynomial whose roots all lie on $\T$, then any $h \in H^2$ can be written as 
\begin{equation}\label{eq:Thm:hardy-factorization}
h = \frac{u}{p v + 1},
\end{equation}
where $u, v \in H^{2}$, $pu, pv \in H^{\infty}$, and $p v + 1$ is outer.
\end{Theorem}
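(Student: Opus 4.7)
The plan is to reformulate the factorization as the problem of producing a single outer function. Setting $w := pv + 1$ and $u := hw$, the theorem reduces to finding an outer $w\in H^\infty$ such that $(w-1)/p\in H^2$ and $phw\in H^\infty$: indeed, $v := (w-1)/p$ then lies in $H^2$ with $pv = w-1\in H^\infty$, and $u := hw$ lies in $H^2$ with $pu = phw\in H^\infty$, giving the representation $h = u/(pv+1)$.

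To construct such a $w$ I would try the ansatz $w := \exp(-pF)$ for some function $F$ analytic on $\D$. Two features recommend this form. First, an exponential of an analytic function is automatically outer, so $w$ is outer by construction. Second, introducing the entire function $\psi(z) := (1-e^{-z})/z$, one has the identity $(w-1)/p = -F\,\psi(pF)$; since $|\psi|$ is bounded on the closed right half-plane $\{\Re z\geq 0\}$, the combination $F\in H^2$ and $\Re(pF)\geq 0$ a.e.\ on $\T$ forces $(w-1)/p\in H^2$ and $w\in H^\infty$ with $\|w\|_\infty\leq 1$. The condition $phw\in H^\infty$ then becomes $|ph|\,e^{-\Re(pF)}\leq C$ a.e.\ on $\T$, i.e.\ $\Re(pF)\geq (\log|ph|)_+$ a.e., and since $(\log|ph|)_+\in L^2(\T)$, a natural attempt is to construct $pF$ as the Herglotz transform
\[
pF(z) := \int_\T \frac{e^{it}+z}{e^{it}-z}\,L(e^{it})\,dm(t)
\]
of a nonnegative $L\in L^2(\T)$ majorizing $(\log|ph|)_+$. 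This yields $pF\in H^2$ with $\Re(pF)|_\T = L\geq (\log|ph|)_+$, as desired.

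The main obstacle is that $F = (pF)/p$ must itself belong to $H^2$: the Herglotz transform $pF$ must vanish at each root $\zeta_j$ of $p$ in a sense compatible with the multiplicity $m_j$, so that both $\Re(pF) = L$ and its harmonic conjugate are controlled near the $\zeta_j$. Consequently $L$ must be chosen to vanish at each $\zeta_j$ at a carefully calibrated rate while still dominating $(\log|ph|)_+$. The key feature that should make this feasible is that the polynomial factor in $H = ph$ already tames the growth of $|ph|$ near the roots of $p$, so $(\log|ph|)_+$ is itself small there; turning this heuristic into an explicit choice of $L$ (and hence of $F$) that simultaneously accommodates all roots of $p$ is the heart of the argument.
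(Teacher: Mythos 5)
Your reduction (find an outer $w\in H^\infty$ with $(w-1)/p\in H^2$ and $phw\in H^\infty$) is correct, and the general shape of your construction --- a Herglotz-type integral whose real part dominates a quantity built from $|ph|$ --- is genuinely close in spirit to what the paper does. But the argument does not close, for two reasons. First, a small but real error: it is \emph{not} true that an exponential of an analytic function is automatically outer; $\exp\bigl(-(1+z)/(1-z)\bigr)$ is the atomic singular inner function. Outerness of $e^{-pF}$ requires that $pF$ be the Herglotz transform of an \emph{absolutely continuous} positive measure, which your eventual choice $pF=\int_\T\frac{e^{it}+z}{e^{it}-z}L\,dm$ with $L\in L^2$ does supply, so this is repairable --- but the justification you give is not a proof. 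Second, and decisively, the step you yourself flag as ``the heart of the argument'' is exactly the hard point, and it is left entirely open: you need the Herglotz transform $pF$ of $L$ to be divisible by $p$ with quotient in $H^2$. Making $L$ vanish fast at each root $\zeta_j\in\T$ controls only $\Re(pF)=P[L]$ near $\zeta_j$; the harmonic conjugate is a nonlocal singular integral and is not controlled by pointwise decay of $L$, so there is no evident way to guarantee $(pF)/(z-\zeta_j)^{m_j}\in H^2$. As it stands, the existence of an admissible $L$ is asserted, not established.

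The paper's proof is designed precisely to sidestep this division problem. It defines
\[
F(z)=2\,p(z)\int_{-\pi}^{\pi}\frac{e^{it}}{e^{it}-z}\,\frac{|p(e^{it})|}{p(e^{it})}\,|h(e^{it})|\,\frac{dt}{2\pi},
\]
so that $F/p$ is by construction the Cauchy transform of an $L^2$ function and hence lies in $H^2$ --- divisibility is built into the kernel rather than imposed on the density. A partial-fraction decomposition of $e^{it}p(z)/\bigl((e^{it}-z)p(e^{it})\bigr)$ then shows $F=F_0+q$, where $F_0$ is the Herglotz integral of $|ph|$ (so $\Re F_0>0$) and $q$ is a polynomial; the denominator is taken to be $(1-q)/(F_0+1)$, the bound $|ph|/(\Re F_0+1)=|ph|/(|ph|+1)\le 1$ replaces your exponential estimate (and avoids the outerness issue entirely, since $F_0+1$ has positive real part and $1-q$ is made zero-free by rescaling $h$). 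If you want to complete your proof along your own lines, you need an analogue of this ``absorb the discrepancy into a polynomial'' device; without it the construction of $F$ with $F\in H^2$ and $\Re(pF)\ge(\log|ph|)_+$ remains unproved.
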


Our methods also yield a characterization of  the cyclic vectors for the shift operator $f \mapsto z f$ on $\MM(\overline{a})$, and hence $\mathscr{H}(b)$, for rational $b$ (Proposition \ref{lkjhgoiuyt}).

\section{The Smirnov class for general spaces}

Before getting to the Smirnov class for $\HH(b)$, let us make a few remarks about the general case where $\h$ is any reproducing kernel Hilbert space of analytic functions on $\D$. Define the {\em multiplier algebra} of $\h$ as
$$\Mult(\h) = \{ \phi \in \operatorname{Hol}(\D): \phi \h \subset \h\}.$$
Standard facts say that $\Mult(\h) \subset H^{\infty}$ and if the constant function $1$ belongs to $\h$, which will be the case of $\mathscr{H}(b)$ spaces we consider, then $\Mult(\h) \subset \h$ and hence  $\Mult(\h) \subset \h \cap H^{\infty}$. Moreover,  when $\phi \in  \Mult(\h)$, an application of the closed graph theorem, says that the multiplication operator $M_{\phi} f = \phi f$ is a bounded operator on $\h$. The quantity 
$$\|M_{\phi}\| = \sup_{f \in \h, \|f\| = 1} \|\phi f\|, $$ the operator norm of $M_{\phi}$, is called the {\em multiplier norm} of $\phi$ and satisfies $\|\phi\|_{\infty} \leq \|M_{\phi}\|$. For the Hardy space $H^2$ it is a standard fact that $\Mult(H^2) = H^{\infty}$ and $\|M_{\phi}\| = \|\phi\|_{\infty}$. For many other spaces, $\Mult(\h)$ is a proper subset of $H^{\infty}$ and  it may happen that $\|\phi\|_{\infty} < \|M_{\phi}\|$. 

The {\em Smirnov class} of $\h$ is
$$\NN^{+}(\h) := \Big\{\frac{\phi}{\psi}: \phi, \psi \in \Mult(\h), \overline{\phi \h} = \h\Big\}.$$
In the above $\overline{\phi \h}$ denotes the closure of $\phi \h$ in the norm of $\h$. 
The following fact is standard but we recall it here for the reader's convenience. 

\begin{Lemma}\label{bbBBbv}
$\NN^{+}(\h)$ is an algebra.
\end{Lemma}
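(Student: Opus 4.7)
The plan is to verify the three algebra axioms (closure under addition, multiplication, and scalar multiplication) directly in the quotient representation $\phi/\psi$, reducing everything to two underlying facts: (i) $\Mult(\h)$ is itself a subalgebra of $\operatorname{Hol}(\D)$, and (ii) the product of two cyclic multipliers is again a cyclic multiplier.

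First I would record that $\Mult(\h)$ is a subalgebra: for $\phi_1,\phi_2\in\Mult(\h)$ and any $f\in\h$, both $(\phi_1+\phi_2)f=\phi_1f+\phi_2f$ and $(\phi_1\phi_2)f=\phi_1(\phi_2f)$ lie in $\h$, while constants are automatically multipliers. This handles every numerator that appears when combining quotients. Using the identities
\[
\frac{\phi_1}{\psi_1}+\frac{\phi_2}{\psi_2}=\frac{\phi_1\psi_2+\phi_2\psi_1}{\psi_1\psi_2},\qquad \frac{\phi_1}{\psi_1}\cdot\frac{\phi_2}{\psi_2}=\frac{\phi_1\phi_2}{\psi_1\psi_2},
\]
the only remaining point is that the denominator $\psi_1\psi_2$ is still a cyclic multiplier.

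The main step is therefore the claim: if $\psi_1\h$ and $\psi_2\h$ are each dense in $\h$, then so is $\psi_1\psi_2\h$. Here I would use $\psi_1\psi_2\h=M_{\psi_1}(\psi_2\h)$, combined with the boundedness of $M_{\psi_1}$ on $\h$ (guaranteed by the closed graph theorem, as mentioned earlier in the paper). Given $g\in\h$ and $\varepsilon>0$, first pick $f\in\h$ with $\|\psi_1 f-g\|<\varepsilon/2$ using the cyclicity of $\psi_1$, then pick $h\in\h$ with $\|\psi_2 h-f\|<\varepsilon/(2(\|M_{\psi_1}\|+1))$ using the cyclicity of $\psi_2$. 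Applying $M_{\psi_1}$ and the triangle inequality yields $\|\psi_1\psi_2 h-g\|<\varepsilon$, establishing the density.

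The remaining verifications — that $c\phi/\psi\in\NN^+(\h)$ for $c\in\C$ and that $0=0/1\in\NN^+(\h)$ — are immediate. I do not foresee any real obstacle; the only substantive ingredient is the short approximation argument for density of $\psi_1\psi_2\h$, which rests only on boundedness of multiplication operators and a diagonal-style estimate.
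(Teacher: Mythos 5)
Your proof is correct and follows essentially the same route as the paper: both reduce to the standard identities for sums and products of quotients (so that $\Mult(\h)$ being an algebra handles the numerators) and to the single key fact that the product of two cyclic multipliers is again cyclic. The only difference is how that last fact is verified --- you give a direct $\varepsilon$-approximation argument using the boundedness of $M_{\psi_1}$, whereas the paper observes that $\ker M_{\psi_1\psi_2}^{*}=\ker\bigl(M_{\psi_2}^{*}M_{\psi_1}^{*}\bigr)=\{0\}$ as a composition of injective operators; both are one-line justifications of the same point.
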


\begin{proof}
For a multiplier $\phi$ we have $\overline{\phi \h} = \h$ if and only if $\ker M_{\phi}^{*} = \{0\}$. 
For $f_1, f_2 \in \NN^{+}(\h)$, we need to prove that $f_1 + f_2$ and $f_1  f_2$ belong to $\NN^{+}(\h)$. Write 
$$f_{j} = \frac{\phi_j}{\psi_j}, \quad \phi_j, \psi_j \in \Mult(\h), \quad \ker M_{\psi_j}^{*} = \{0\}.$$
Since 
$$f_1 + f_2 = \frac{\phi_1 \psi_2 + \phi_2 \psi_1}{\psi_1 \psi_2}$$ and $\Mult(\h)$ is an algebra, it follows that $\phi_1 \psi_2 + \phi_2 \psi_1$ and $\psi_1 \psi_2 \in \Mult(\h)$. Furthermore, since $M_{\psi_1 \psi_2}^{*} = M_{\psi_2}^{*} M_{\psi_1}^{*}$ is injective, we see that $f_1 + f_2 \in \NN^{+}(\h)$. The proof that $f_1 f_2 \in \NN^{+}(\h)$ is done in a similar way. 
\end{proof}

While the following lemma might appear quite obvious, it is important to be stated here
since the complete Nevanlinna-Pick property, which plays quite some role in connection
with the Smirnov property, heavily depends on the norm of the space whereas the Smirnov
property does not.

\begin{Lemma}\label{poiuytrfdcvbhtr5678ujh}
Suppose $\h_1$ and $\h_2$ are reproducing kernel Hilbert spaces of analytic functions on $\D$ that are equal as sets. Then $\NN^{+}(\h_1) = \NN^{+}(\h_2)$. 
\end{Lemma}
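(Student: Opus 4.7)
The plan is to reduce everything to the observation that two reproducing kernel Hilbert spaces of analytic functions on $\D$ which coincide as sets must have equivalent norms. Once this is in hand, the equality of Smirnov classes falls out almost immediately.

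First, I would establish norm equivalence via the closed graph theorem. Consider the identity map $I \colon \h_1 \to \h_2$ (well-defined by hypothesis). If $f_n \to f$ in $\h_1$ and $f_n \to g$ in $\h_2$, then for every $z \in \D$, evaluation against the reproducing kernels gives $f_n(z) \to f(z)$ and $f_n(z) \to g(z)$, so $f = g$ pointwise, hence as elements of both spaces. Thus $I$ has closed graph and is bounded; symmetry gives a two-sided estimate $c\|f\|_{\h_1} \leq \|f\|_{\h_2} \leq C \|f\|_{\h_1}$ for all $f$.

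Next I would observe that the multiplier algebras coincide. The condition $\phi \in \Mult(\h_j)$ is the purely set-theoretic requirement $\phi \h_j \subset \h_j$, and since $\h_1 = \h_2$ as sets of functions, we get $\Mult(\h_1) = \Mult(\h_2)$ as sets. (The multiplier norms may differ, but this plays no role in the definition of $\NN^+$.) Finally, for $\psi \in \Mult(\h_1) = \Mult(\h_2)$, the density $\overline{\psi \h_1}^{\,\h_1} = \h_1$ is equivalent to $\overline{\psi \h_2}^{\,\h_2} = \h_2$: indeed, the equivalence of norms from the first step means that closures of the common subset $\psi \h_1 = \psi \h_2$ in the two norms agree as subsets of the common underlying vector space.

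Putting these three ingredients together, an element $\phi/\psi$ is in $\NN^+(\h_1)$ precisely when $\phi, \psi \in \Mult(\h_1) = \Mult(\h_2)$ and $\psi$ is a cyclic multiplier for $\h_1$, which by the previous step is the same as being cyclic for $\h_2$. Hence $\NN^+(\h_1) = \NN^+(\h_2)$. The only genuine piece of work is the closed graph argument for norm equivalence; everything else is formal once that is in place, and the key point emphasized by the lemma is that this norm equivalence is automatic from coincidence as sets.
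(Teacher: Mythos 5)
Your proof is correct and follows the same route as the paper: closed graph theorem for norm equivalence, then the set-theoretic identification of the multiplier algebras and the preservation of density under equivalent norms. The only difference is that you spell out the closed-graph verification via point evaluations, which the paper leaves implicit.
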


\begin{proof}
The closed graph theorem shows that the norms on $\h_1$ and $\h_2$ are equivalent in that there are positive constants $c_1, c_2$ such that 
$$c_1 \|f\|_{\h_1} \leq \|f\|_{\h_2} \leq c_2 \|f\|_{\h_1}, \quad f \in \h_1 = \h_2.$$
Thus $\Mult(\h_1) = \Mult(\h_2)$, and, for a multiplier $\phi$, it follows  that $\phi \h_1$ is dense in $\h_1$ if and only if $\phi \h_2$ is dense in $\h_2$. Thus $\NN^{+}(\h_1) = \NN^{+}(\h_2)$.
\end{proof}

\section{Some reminders about $\HH(b)$ spaces}

Here are some of the basics of $\HH(b)$ spaces needed for this paper. The details can be found in \cite{FM1, FM2, Sa}. For $b \in H^{\infty}_{1}$, the de Branges--Rovnyak space $\HH(b)$ is the reproducing kernel Hilbert space induced by the positive definite kernel 
$$k^{b}_{\lambda}(z) = \frac{1 - b(z) \overline{b(\lambda)}}{1 - \overline{\lambda} z}, \quad \lambda, z \in \D.$$
More properties of the specific class of $\HH(b)$ spaces that we will consider are given in Proposition \ref{678uhnjk}.

When $b$ is not an extreme point of $H^{\infty}_{1}$, which is equivalent to the condition $\log(1 - |b|) \in L^1(\T)$ (which will always be the case in this paper since $b$ is rational but not inner), then there exists a unique (up to a unimodular scalar) outer $a\in H_1^\infty$ such that $|a|^2 + |b|^2 = 1$ almost everywhere on $\T$. The pair $(a, b)$ is call a {\em Pythagorean pair} and $a$ is known as the {\em Pythagorean mate} to $b$. Below are some facts from \cite{MR3503356} about Pythagorean pairs. 

\begin{Proposition}
Suppose $b \in H^{\infty}_{1}$ is rational but not inner. Then
\begin{enumerate}
\item $a$ is also rational; 
\item $(a, b)$ form a corona pair in that
$$\inf_{z \in \D}( |a(z)| + |b(z)| )> 0.$$
\end{enumerate}
\end{Proposition}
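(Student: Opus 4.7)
The plan is to dispatch the two claims separately, with (1) being the substantive statement and (2) following easily once (1) is known.

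For (1), I would begin by writing $b = P/Q$ in lowest terms as a rational function. Since $b\in H^{\infty}$ is bounded on $\D$, the denominator $Q$ is nowhere vanishing on $\overline{\D}$. Almost everywhere on $\T$ one then has
\[
|a(\zeta)|^{2} \;=\; 1 - |b(\zeta)|^{2} \;=\; \frac{|Q(\zeta)|^{2} - |P(\zeta)|^{2}}{|Q(\zeta)|^{2}},
\]
and the numerator $|Q|^{2} - |P|^{2}$ is a non-negative trigonometric polynomial on $\T$. The classical Fej\'er--Riesz spectral factorization then produces a polynomial $A$, with no zeros in $\D$, such that $|A(\zeta)|^{2} = |Q(\zeta)|^{2} - |P(\zeta)|^{2}$ almost everywhere on $\T$. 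The rational function $A/Q$ is bounded on $\D$, has no poles on $\overline{\D}$, no zeros in $\D$, and its modulus agrees with $|a|$ almost everywhere on $\T$. By uniqueness (up to a unimodular constant) of the outer function with prescribed non-vanishing boundary modulus, $a$ must be a unimodular constant multiple of $A/Q$, hence rational.

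For (2), by (1) both $a$ and $b$ are rational functions in $H^{\infty}$, so neither has a pole on $\overline{\D}$ and therefore $|a| + |b|$ is continuous on the compact set $\overline{\D}$. Since $a$ is outer, it has no zeros in $\D$, so $|a(z)| + |b(z)| \ge |a(z)| > 0$ for every $z \in \D$. On $\T$, the identity $|a|^{2} + |b|^{2} = 1$ forces $|a(\zeta)| + |b(\zeta)| \ge 1$. Hence $|a| + |b|$ is a strictly positive continuous function on the compact set $\overline{\D}$, and so attains a positive minimum. This is the corona condition.

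The only substantive step is the Fej\'er--Riesz factorization in (1): one must simultaneously extract a polynomial $A$ with the correct modulus on $\T$ \emph{and} choose the factorization so that $A$ has no zeros in $\D$ (so that the quotient $A/Q$ is genuinely outer, rather than merely a rational function whose modulus agrees with $a$ on $\T$). Once this choice of $A$ is in hand, all remaining verifications are immediate.
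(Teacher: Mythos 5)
Your proof is correct; the paper itself does not prove this proposition but simply cites \cite{MR3503356}, and the argument there is essentially the one you give — a Fej\'er--Riesz factorization of the non-negative trigonometric polynomial $|Q|^2-|P|^2$ followed by uniqueness of the outer function with prescribed modulus, with (2) then following from continuity of $|a|+|b|$ on $\overline{\D}$. The only cosmetic point is that the boundary modulus of $a$ need not be non-vanishing (it vanishes wherever $|b|=1$ on $\T$, i.e.\ at the zeros of $A$ on $\T$); what the uniqueness step actually requires is $\log|a|\in L^1(\T)$, which holds because the nonzero trigonometric polynomial $|Q|^2-|P|^2$ has only finitely many zeros, each of finite order.
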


Two important classes of functions in $\HH(b)$, where $b$ is not an extreme point of $H^{\infty}_{1}$, are 
$$\MM(a) := a H^2 \quad \mbox{and} \quad \MM(\overline{a}) := T_{\overline{a}} H^2.$$
Here $T_{\overline{a}}$ is the co-analytic Toeplitz operator $T_{\overline{a}} f = P_{+}( \overline{a} f)$ on $H^2$. 
We mention that $a$ is assumed outer (guaranteeing  uniqueness up to a unimodular
scalar factor). 
We will denote the zeros of $a$ on $\T$ by $\xi_1, \ldots, \xi_n$ and
the corresponding multiplicity by $m_j$, and define 
\begin{equation}\label{nmnbhjhjk898}
a_1(z) = \prod_{j = 1}^{n} (z - \xi_j)^{m_j},
\end{equation}
Since $a$ cannot vanish on $\D$ and zeros outside the closure of $\D$ define invertible factors, we have 
$\MM(\overline{a}) = \MM(\overline{a_1})$ \cite[Prop.~2.7]{MR3967886}. 
The proposition below is from \cite{MR3110499, MR3503356}.

\begin{Proposition}\label{678uhnjk}
Suppose $b \in H^{\infty}_{1}$ is rational but not inner. Then
\begin{enumerate}
\item  $\MM(a_1) \subset \MM(\overline{a_1}) = \HH(b)$;
\item  If $\xi_1, \ldots, \xi_n$ are the zeros of $a_1$ on $\T$ with corresponding multiplicities $m_1, m_2, \ldots, m_n$, then 
\begin{equation}\label{oiuyxdftcvgyvghK}
\HH(b) = \MM(\overline{a_1}) = \Big(\prod_{j = 1}^{n} (z - \xi_j)^{m_j}\Big) H^2  \dotplus \mathscr{P}_{N - 1},
\end{equation}
where $N = m_1 + m_2 + \cdots + m_n$ and $\mathscr{P}_{N - 1}$ are the polynomials whose degree is at most $N - 1$, and $\dotplus$ signifies the algebraic direct sum. 
\item An equivalent norm on $\HH(b)$ is given by
\begin{equation}\label{alternate8udheye}
\|a_1 g + q\|^{2} = \|g\|_{H^2}^{2} + \|q\|_{H^2}^{2}.
\end{equation}
 where $f=a_1g+q$ is the decomposition given by \eqref{oiuyxdftcvgyvghK}.
\end{enumerate}
\end{Proposition}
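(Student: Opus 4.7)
My plan is to prove the three items in order, working inside $\MM(\overline{a_1}) = T_{\overline{a_1}} H^2$ through explicit Fourier computations once the identification $\HH(b) = \MM(\overline{a_1})$ is in hand.

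For part (a), the equality $\HH(b) = \MM(\overline{a})$ with equivalent norms is a classical theorem of Sarason valid whenever $b$ is non-extreme and $(a, b)$ forms a corona pair; both hypotheses hold by the preceding proposition, and $\MM(\overline{a}) = \MM(\overline{a_1})$ is given in \cite[Prop.~2.7]{MR3967886}. For the inclusion $\MM(a_1) \subset \MM(\overline{a_1})$, I would use the unimodular identity
\[
\overline{a_1(\xi)} = c\, a_1(\xi)\, \bar{\xi}^{\,N}, \quad \xi \in \T, \qquad c := (-1)^{N} \Big/ \prod_{j=1}^{n} \xi_j^{m_j},
\]
which follows from $\overline{\xi - \xi_j} = -(\xi - \xi_j)/(\xi \xi_j)$ on $\T$ and has $|c| = 1$. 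Hence for any $h \in H^{2}$, setting $g := \bar c\, z^{N} h \in H^{2}$ gives $T_{\overline{a_1}} g = P_{+}(|c|^{2} a_{1} h) = a_{1} h$, so $a_{1} h \in \MM(\overline{a_1})$.

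For part (b), I would compute $T_{\overline{a_1}} g = c\, P_{+}(a_{1} \bar{z}^{\,N} g)$ for $g \in H^{2}$ by splitting $\bar{z}^{\,N} g = (S^{*})^{N} g + \sum_{j=0}^{N-1} \widehat{g}(j)\, z^{j - N}$ on $\T$ and applying $P_{+}$ to $a_{1}$ times each piece. The first piece yields $c\, a_{1} (S^{*})^{N} g \in a_{1} H^{2}$, and each $a_{1} z^{j - N}$ with $0 \le j \le N - 1$ has Fourier support in $\{j - N, \ldots, j\}$, so $P_{+}(a_{1} z^{j - N})$ is a polynomial of degree exactly $j$ with nonzero leading coefficient. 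This proves $\MM(\overline{a_1}) \subset a_{1} H^{2} + \mathscr{P}_{N - 1}$. The reverse inclusion uses (a) for the $a_1 H^2$ summand, while applying $T_{\overline{a_1}}$ to $1, z, \ldots, z^{N - 1}$ yields polynomials of strictly increasing degrees $0, 1, \ldots, N - 1$, which therefore span $\mathscr{P}_{N - 1}$. Directness of the sum follows since if $q = a_{1} h$ with $q \in \mathscr{P}_{N - 1}$ and $h \in H^{2}$, then $h = q/a_{1}$ would have a pole on $\T$ unless $q \equiv 0$, contradicting the existence of boundary $L^{2}(\T)$ values for functions in $H^{2}$.

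For part (c), define $T \colon H^{2} \oplus \mathscr{P}_{N - 1} \to \MM(\overline{a_1})$ by $T(g, q) := a_{1} g + q$, equipping the domain with the Hilbert norm $\|(g, q)\|^{2} := \|g\|_{H^{2}}^{2} + \|q\|_{H^{2}}^{2}$. Part (b) makes $T$ a bijection, and the estimate $\|a_{1} g\|_{\MM(\overline{a_1})} \le \|\bar c\, z^{N} g\|_{H^{2}} = \|g\|_{H^{2}}$ from (a) together with the equivalence of any two norms on the finite-dimensional space $\mathscr{P}_{N - 1}$ shows $T$ is continuous. The open mapping theorem then yields $T^{-1}$ continuous as well, i.e., the equivalence of norms asserted in \eqref{alternate8udheye}. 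The main obstacle will be in part (a): the identification $\HH(b) = \MM(\overline{a})$ is not elementary and depends on Sarason's structural results together with the corona condition, rather than on direct Fourier-theoretic arguments. Once (a) is granted, the remaining parts reduce to explicit polynomial computations with $a_{1}$ and a routine open-mapping argument.
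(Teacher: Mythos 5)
Your handling of $\MM(\overline{a_1})$ is correct and pleasantly explicit --- more so than the paper, which offers no proof at all and simply attributes the proposition to \cite{MR3110499, MR3503356}. The unimodular identity $\overline{a_1}=c\,a_1\bar z^{\,N}$ on $\T$, the resulting inclusion $a_1H^2\subset T_{\overline{a_1}}H^2$, the splitting of $\bar z^{\,N}g$ showing $T_{\overline{a_1}}g\in a_1H^2+\mathscr{P}_{N-1}$ with the degree count on $P_+(a_1z^{j-N})$, the directness argument via the pole on $\T$, and the open-mapping argument for the norm are all sound; this is essentially the computation carried out in \cite{MR3967886} for the $\MM(\overline{a_1})$ half of the statement.

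The gap sits exactly where you flag it, but it is more serious than ``not elementary'': the theorem you invoke --- that $\HH(b)=\MM(\overline{a})$ whenever $b$ is non-extreme and $(a,b)$ is a corona pair --- is not a correct statement of the known results. What holds in general is only the contractive inclusion $\MM(\overline{a})\subset\HH(b)$. Since $\HH(b)=\Rg(I-T_bT_{\overline{b}})^{1/2}$ and $\MM(\overline{a})=\Rg T_{\overline{a}}$, Douglas's range-inclusion lemma shows that the reverse inclusion is equivalent to an operator inequality $H_{\overline{b}}^*H_{\overline{b}}\le C\,T_{\overline{a}}T_a$, where $H_{\overline{b}}f=P_-(\overline{b}f)$. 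The corona condition, via the Toeplitz corona theorem, amounts to $T_aT_{\overline{a}}+T_bT_{\overline{b}}\ge \epsilon I$, which only yields $H_{\overline{a}}^*H_{\overline{a}}+H_{\overline{b}}^*H_{\overline{b}}\le(1-\epsilon)I$, i.e.\ $\|H_{\overline{b}}\|<1$; it does not give the weighted bound against $T_{\overline{a}}T_a=T_{|a|^2}$, an operator that fails to be bounded below precisely because $a$ vanishes on $\T$. The sources the paper cites prove \eqref{oiuyxdftcvgyvghK} for rational $b$ by computing $\HH(b)$ directly from the characterization $f\in\HH(b)\iff T_{\overline{b}}f\in T_{\overline{a}}H^2$ together with the norm formula $\|f\|_{\HH(b)}^2=\|f\|_{H^2}^2+\|f^+\|_{H^2}^2$, not by passing through a corona-pair criterion. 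As written, then, you have identified the structure of $\MM(\overline{a_1})$ but not its coincidence with $\HH(b)$, which is the actual content of all three items. To close the gap along your operator-theoretic lines, verify $H_{\overline{b}}^*H_{\overline{b}}\le C\,T_{\overline{a}}T_a$ for rational $b$: here $H_{\overline{b}}$ has finite rank by Kronecker's theorem, so a second application of Douglas's lemma reduces the inequality to the concrete range inclusion $\Rg H_{\overline{b}}^*\subset\MM(\overline{a})$, which can be checked against your decomposition of $\MM(\overline{a_1})$.
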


We will prove that $\HH(b)  \subset \NN^{+}(\HH(b))$ (Theorem \ref{maintheorem}) by using the fact that $\HH(b) = \MM(\overline{a_1})$ (as sets with equivalent norms) and then employing Lemma \ref{poiuytrfdcvbhtr5678ujh}. So we will now focus on proving the following result. 

\begin{Theorem}\label{bbvFFd}
If $a_1$ is a polynomial whose zeros are contained in $\T$, then $\MM(\overline{a_1}) \subset \NN^{+}(\MM(\overline{a_1}))$. 
\end{Theorem}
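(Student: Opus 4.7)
The plan is to combine Proposition \ref{678uhnjk} with Theorem \ref{Thm:hardy-factorization} to produce, for each $f\in\MM(\overline{a_1})$, an explicit quotient representation $f=\phi/\psi$ with $\phi,\psi\in\Mult(\MM(\overline{a_1}))$ and $\psi$ a cyclic multiplier. Given $f\in\MM(\overline{a_1})$, I first invoke Proposition \ref{678uhnjk} to write $f=a_1g+q$ with $g\in H^2$ and $q\in\P_{N-1}$. Then I apply Theorem \ref{Thm:hardy-factorization} (with $p=a_1$) to the $H^2$-component $g$, obtaining $g=u/(a_1v+1)$ with $u,v\in H^2$, $a_1u,a_1v\in H^\infty$, and $a_1v+1$ outer. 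Plugging in produces the candidate
\[
f=\frac{a_1(u+qv)+q}{a_1v+1}\;=:\;\frac{\phi}{\psi}.
\]

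Next I would verify that both $\phi$ and $\psi$ lie in $\Mult(\MM(\overline{a_1}))$. Membership in $\MM(\overline{a_1})=a_1H^2\dotplus\P_{N-1}$ is immediate: $u+qv\in H^2$ and $v\in H^2$ give the $a_1H^2$ parts, while $q,1\in\P_{N-1}$ give the polynomial parts. Boundedness of $\phi$ follows from $a_1u,a_1v\in H^\infty$ together with the fact that $q$ is a polynomial; $\psi$ is bounded for the same reason. The inclusion $H^\infty\cap\MM(\overline{a_1})\subset\Mult(\MM(\overline{a_1}))$ is a short consequence of the decomposition \eqref{oiuyxdftcvgyvghK}: if $\phi=a_1g_\phi+q_\phi\in H^\infty$ and $f=a_1g+q\in\MM(\overline{a_1})$, then $\phi f=a_1(\phi g+g_\phi q)+q_\phi q$, and the polynomial $q_\phi q$ (of degree $\le 2N-2$) is reduced modulo $a_1$ by polynomial division, delivering $\phi f\in a_1H^2\dotplus\P_{N-1}$.

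The main technical step is to show that $\psi=a_1v+1$ is a cyclic multiplier, i.e.\ that $\psi\MM(\overline{a_1})$ is dense in $\MM(\overline{a_1})$. Since $\psi$ is outer, $\psi H^2$ is dense in $H^2$; by the equivalent norm \eqref{alternate8udheye}, this transfers to density of $\psi\cdot a_1H^2=a_1\psi H^2$ inside $a_1H^2$ in the $\MM(\overline{a_1})$ norm, so $a_1H^2\subset\overline{\psi\MM(\overline{a_1})}$. To capture the polynomial part, I observe the algebraic identity
\[
r=\psi r-a_1(vr),\qquad r\in\P_{N-1},
\]
which exhibits $r$ as the difference of $\psi r\in\psi\MM(\overline{a_1})$ and $a_1(vr)\in a_1H^2$, the latter already lying in $\overline{\psi\MM(\overline{a_1})}$. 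Hence $\P_{N-1}\subset\overline{\psi\MM(\overline{a_1})}$ as well, and altogether $\overline{\psi\MM(\overline{a_1})}\supset a_1H^2+\P_{N-1}=\MM(\overline{a_1})$, proving cyclicity.

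The heart of the argument is Theorem \ref{Thm:hardy-factorization}: it supplies a denominator of the very specific form $1+a_1v$, which is exactly what allows the cyclicity check to reduce—via $r=\psi r-a_1(vr)$—to classical density of $\psi H^2$ in $H^2$ for outer $\psi$. I therefore expect the main obstacle to be to engineer $\phi$ and $\psi$ so that all three requirements (both are multipliers, the denominator is outer, and outerness propagates to cyclicity in $\MM(\overline{a_1})$) are simultaneously met; the choice above is tailored precisely for this purpose, and the role of $q$ being absorbed into the numerator (via the $qv$ and $q$ terms) is what keeps $\psi$ independent of $q$ and hence uniformly manageable.
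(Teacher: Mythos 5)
Your proof is correct and follows essentially the same route as the paper: in both, the key step is Theorem \ref{Thm:hardy-factorization} applied with $p=a_1$, which produces the outer denominator $a_1v+1$ whose cyclicity in $\MM(\overline{a_1})$ drives the whole argument. The only differences are organizational --- the paper reduces to $a_1H^2$ via the algebra property of $\NN^{+}$ (Lemma \ref{bbBBbv}) and invokes the general cyclicity criterion (Proposition \ref{lkjhgoiuyt}) with $r_2=1$, whereas you fold the polynomial part into a single explicit fraction and verify cyclicity of $a_1v+1$ directly via the identity $r=\psi r-a_1(vr)$; both are valid.
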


So far, we can readily identify the contents of $\MM(\overline{a_1})$. We can also readily identify the multipliers. Recall that an analytic function $\phi$ is a {\em multiplier} of $\MM(\overline{a_1})$ if $\phi \MM(\overline{a_1}) \subset \MM(\overline{a_1})$. The multiplier algebra for $\MM(\overline{a_1})$ is denoted by $\Mult(\MM(\overline{a_1}))$. As mentioned previously,  $\Mult(\MM(\overline{a_1})) \subset H^{\infty} \cap \MM(\overline{a_1})$ but equality does not always hold. The multiplier algebra for general $\HH(b)$ spaces has been studied in \cite{MR1098860, MR1254125, MR1614726} and is often a complicated class of functions that is related to the invertibility of the product of certain Hankel operators and their adjoints \cite{MR1254125}. Fortunately, in our case, they are particularly simple \cite{MR3967886}.

\begin{Proposition}\label{vghyujnbg77YY}
If $a_1$ is a polynomial of degree $N$ whose zeros lie in $\T$, then 
$$\Mult(\MM(\overline{a_1})) = \MM(\overline{a_1}) \cap H^{\infty}.$$ In particular, $\phi \in \Mult(\MM(\overline{a_1}))$ if and only if $\phi = a_1 \widetilde{\phi} + r$, where $\widetilde{\phi} \in H^2$, $r \in \mathscr{P}_{N - 1}$, and  $a_1 \widetilde{\phi} \in H^{\infty}$.
\end{Proposition}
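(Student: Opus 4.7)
The plan is to prove both inclusions, using the concrete decomposition \eqref{oiuyxdftcvgyvghK} supplied by Proposition~\ref{678uhnjk}.

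For the inclusion $\Mult(\MM(\overline{a_1})) \subset \MM(\overline{a_1}) \cap H^\infty$, I would invoke the general facts recalled in Section~2: every multiplier of a reproducing kernel Hilbert space of analytic functions lies in $H^\infty$ (closed graph theorem), and since $1 \in \mathscr{P}_{N-1} \subset \MM(\overline{a_1})$, any multiplier $\phi$ satisfies $\phi = \phi \cdot 1 \in \MM(\overline{a_1})$. (The edge case $N=0$ reduces $\MM(\overline{a_1})$ to $H^2$, where the statement is classical.)

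For the reverse inclusion, I would take $\phi \in \MM(\overline{a_1}) \cap H^\infty$ and write $\phi = a_1 \widetilde{\phi} + r$ with $\widetilde{\phi} \in H^2$ and $r \in \mathscr{P}_{N-1}$, as guaranteed by \eqref{oiuyxdftcvgyvghK}. The key first observation is that $a_1 \widetilde{\phi} = \phi - r \in H^\infty$, since $\phi \in H^\infty$ by hypothesis and the polynomial $r$ is trivially bounded. This establishes the ``in particular'' characterization immediately (and the converse direction of that characterization is obvious). Then, to verify that $\phi$ is a multiplier, I would pick an arbitrary $f = a_1 g + q \in \MM(\overline{a_1})$ with $g \in H^2$ and $q \in \mathscr{P}_{N-1}$ and expand
\[
\phi f = a_1 \big( a_1 \widetilde{\phi} g + \widetilde{\phi} q + r g \big) + r q.
\]
The parenthesised expression lies in $H^2$: $a_1 \widetilde{\phi} g \in H^2$ because $a_1 \widetilde{\phi} \in H^\infty$ and $g \in H^2$, while $\widetilde{\phi} q, r g \in H^2$ trivially. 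Hence the first summand is in $a_1 H^2$.

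The only slightly delicate piece is the polynomial product $rq$: it need not lie in $\mathscr{P}_{N-1}$ since its degree may be as large as $2N-2$. I would handle it by Euclidean division, writing $rq = a_1 s + t$ with $t \in \mathscr{P}_{N-1}$ and $s$ a polynomial of degree at most $N-2 < N$; in particular $s \in H^2$ and $a_1 s \in a_1 H^2$, so $rq \in a_1 H^2 \dotplus \mathscr{P}_{N-1}$. Combining, $\phi f \in a_1 H^2 \dotplus \mathscr{P}_{N-1} = \MM(\overline{a_1})$, which is what was needed. The main obstacle, if there is one, is simply making sure the polynomial part $rq$ is correctly absorbed into the direct sum decomposition; beyond that the argument is purely algebraic.
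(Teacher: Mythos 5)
Your argument is correct, but note that the paper itself offers no proof of this proposition: it is quoted directly from \cite{MR3967886}, so there is no internal argument to compare against. Your verification is a sensible self-contained substitute. Both inclusions are handled properly: the containment $\Mult(\MM(\overline{a_1})) \subset \MM(\overline{a_1}) \cap H^{\infty}$ follows from the standard facts recalled in Section~2 together with $1 \in \MM(\overline{a_1})$, and for the converse your expansion
\[
\phi f = a_1 \bigl( a_1 \widetilde{\phi} g + \widetilde{\phi} q + r g \bigr) + rq
\]
correctly places the first summand in $a_1 H^2$ (each of the three terms in the parenthesis lies in $H^2$, the crucial one being $a_1\widetilde{\phi}g$ via $a_1\widetilde{\phi}\in H^{\infty}$), and the Euclidean division $rq = a_1 s + t$ with $t \in \mathscr{P}_{N-1}$ correctly absorbs the excess polynomial degree into the direct sum $a_1 H^2 \dotplus \mathscr{P}_{N-1}$. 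The ``in particular'' equivalence then falls out of the decomposition \eqref{oiuyxdftcvgyvghK} exactly as you say, since $a_1\widetilde{\phi} = \phi - r$ differs from the bounded $\phi$ by a polynomial. The only thing worth making explicit is that $\phi f \in \MM(\overline{a_1})$ as a set suffices, boundedness of $M_\phi$ being automatic from the closed graph theorem, but the paper itself records this point in Section~2.
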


The Smirnov class of $\MM(\overline{a_1})$ is then 
$$\NN^{+}(\MM(\overline{a_1}))  = \Big\{\frac{\phi}{\psi}: \phi, \psi \in \Mult(\MM(\overline{a_1})), \overline{\psi \MM(\overline{a_1})} = \MM(\overline{a_1})\Big\}.$$

Proposition \ref{vghyujnbg77YY} describes $ \Mult(\MM(\overline{a_1}))$. To describe $\mathcal{N}^{+}(\MM(\overline{a_1}))$, it remains to characterize the cyclic vectors  for the shift operator on 
$\MM(\overline{a_1})$. This is done in the following. We remind the reader that $a_1$ is a polynomial of the form in \eqref{nmnbhjhjk898}.

\begin{Proposition}\label{lkjhgoiuyt}
Let $a_1$ be a polynomial of degree $N$ whose zeros lie in $\T$ and 
let $\psi = a_1 \widetilde{\psi} + r \in \Mult(\MM(\overline{a_1}))$, where $\widetilde{\psi} \in H^{2}$, with $a_1 \widetilde{\psi} \in H^{\infty}$, and $r \in \mathscr{P}_{N - 1}$. Then the following are equivalent:
\begin{enumerate}
\item $\psi \MM(\overline{a_1})$ is dense in $\MM(\overline{a_1})$;
\item $\psi$ is outer and $\operatorname{gcd}(a_1, r) = 1$. 
\end{enumerate}
\end{Proposition}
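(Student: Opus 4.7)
The plan is to use the direct-sum decomposition from Proposition~\ref{678uhnjk} to turn density of $\psi\MM(\ov{a_1})$ into two decoupled problems: matching the polynomial part (an algebraic condition on $r$ modulo $a_1$) and matching the $a_1H^2$ part (an $H^2$-cyclicity condition on $\psi$). For $f=a_1g+q\in\MM(\ov{a_1})$ with $g\in H^2$ and $q\in\mathscr{P}_{N-1}$, Euclidean division writes
\[
rq=a_1s+t,\qquad s\in\mathscr{P}_{N-2},\ t\in\mathscr{P}_{N-1},
\]
and a direct multiplication gives
\[
\psi f=a_1\bigl(a_1\widetilde{\psi}g+\widetilde{\psi}q+rg+s\bigr)+t,
\]
which is the unique decomposition of $\psi f$ in the form \eqref{oiuyxdftcvgyvghK}. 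Note that the polynomial-part projection $\pi\colon\MM(\ov{a_1})\to\mathscr{P}_{N-1}$, $\pi(a_1g+q)=q$, is a bounded operator by \eqref{alternate8udheye}, and $\pi(\psi f)$ equals $rq$ reduced modulo $a_1$.

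For (b)$\Rightarrow$(a), fix $f_0=a_1g_0+q_0\in\MM(\ov{a_1})$ and $\varepsilon>0$. Since $\GCD(a_1,r)=1$, multiplication by $r$ is invertible on the quotient $\C[z]/(a_1)\cong\mathscr{P}_{N-1}$, so I choose $q\in\mathscr{P}_{N-1}$ with $rq\equiv q_0\pmod{a_1}$. With this choice the polynomial parts of $\psi f$ and $f_0$ agree, hence $\|\psi f-f_0\|_{\MM(\ov{a_1})}=\|g\psi+\widetilde{\psi}q+s-g_0\|_{H^2}$; since $\psi$ is outer, $\psi H^2$ is dense in $H^2$, so a suitable $g\in H^2$ makes this smaller than $\varepsilon$.

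For (a)$\Rightarrow$(b) I handle the two conclusions separately. If $\psi$ had a non-constant inner factor $\Theta$, then $\psi\MM(\ov{a_1})\subset\Theta H^2$; because $\MM(\ov{a_1})\hookrightarrow H^2$ continuously and $\Theta H^2$ is closed in $H^2$, the $\MM(\ov{a_1})$-closure of $\psi\MM(\ov{a_1})$ would sit inside $\MM(\ov{a_1})\cap\Theta H^2$ and thus miss the constant function $1\in\MM(\ov{a_1})$, contradicting density. If instead $\GCD(a_1,r)\neq 1$, then multiplication by $r$ on $\C[z]/(a_1)$ has proper image, so $\pi(\psi\MM(\ov{a_1}))$ is a proper (finite-dimensional, hence closed) subspace of $\mathscr{P}_{N-1}$; continuity of $\pi$ forces the closure of $\psi\MM(\ov{a_1})$ to miss any polynomial in $\mathscr{P}_{N-1}$ outside this subspace, again contradicting density.

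The main subtle point is the necessity of $\GCD(a_1,r)=1$: one must ensure that the finite-dimensional obstruction at the polynomial level survives the passage to the closure, and this is precisely where the continuity of $\pi$, inherited from the equivalent norm \eqref{alternate8udheye}, does the work. Everything else is either Euclidean algebra in $\C[z]/(a_1)$ or the standard fact that outer functions are cyclic in $H^2$.
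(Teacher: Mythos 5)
Your argument is correct, and although it rests on the same structural inputs as the paper's proof (the decomposition \eqref{oiuyxdftcvgyvghK}, the equivalent norm \eqref{alternate8udheye}, density of $\psi H^2$ in $H^2$ for outer $\psi$, and B\'ezout for $\GCD(a_1,r)=1$), the mechanics are genuinely different. For (b)$\Rightarrow$(a) the paper argues by duality: it takes $f=a_1\widetilde f+p$ orthogonal to $\psi\MM(\ov{a_1})$ in the inner product induced by \eqref{alternate8udheye}, kills $\widetilde f$ by testing against $a_1h$, and kills $p$ by testing against $pq$ where $qr=a_1s+1$; you instead approximate directly, using your explicit computation of the \eqref{oiuyxdftcvgyvghK}-decomposition of $\psi f$ to first match the polynomial part exactly (invertibility of multiplication by $r$ on $\C[z]/(a_1)$) and then absorb the $H^2$ part by cyclicity of the outer function $\psi$. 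For (a)$\Rightarrow$(b) the outerness argument is essentially the paper's; for the necessity of $\GCD(a_1,r)=1$ the paper invokes boundedness of the point evaluations $f\mapsto f(\xi_j)$ on $\MM(\ov{a_1})$ (citing an external result), whereas you use boundedness of the polynomial-part projection $\pi$ and the fact that the image of multiplication by $r$ in $\C[z]/(a_1)$ is a proper, hence closed, subspace. Your route buys self-containedness --- everything is read off from \eqref{alternate8udheye} with no appeal to boundary evaluations --- and the explicit formula $\psi f=a_1(\psi g+\widetilde\psi q+s)+t$ with $t\equiv rq \pmod{a_1}$ makes transparent exactly where each hypothesis is used; the paper's orthogonality argument is shorter on the sufficiency side, and its point-evaluation argument localizes the obstruction at the individual zeros $\xi_j$ rather than at the quotient ring. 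One small point worth stating explicitly in your write-up: the function $a_1\widetilde\psi g+\widetilde\psi q+rg+s$ does lie in $H^2$ (because $a_1\widetilde\psi\in H^\infty$ and $\widetilde\psi\in H^2$ while $q,s$ are polynomials), which is what legitimizes calling your expression \emph{the} decomposition of $\psi f$.
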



\begin{proof}
(b) $\Rightarrow$ (a): Assume $\psi$ is outer and $\operatorname{gcd}(a_1, r) = 1$. In view of \eqref{alternate8udheye}, we  use the following inner product on $\MM(\overline{a_1})$:
$$\langle a_1 h_1 + q_1, a_1 h_2 + q_2\rangle = \langle h_1, h_2\rangle_{H^2} + \langle q_1, q_2\rangle_{H^2}.$$
Let $f \in \MM(\overline{a_1})$ such that $f \perp  \psi \MM(\overline{a_1})$, that is, 
\begin{equation}\label{1029384746}
\langle f, \psi g\rangle = 0, \quad g \in \MM(\overline{a_1}).
\end{equation}
Writing $f = a_1 \widetilde{f} + p$ and applying \eqref{1029384746} to $g = a_1 h$ for every $h \in H^2$ we get 
$0 = \langle a_1 \widetilde{f} + p, a_1 \psi h\rangle =   \langle \widetilde{f}, \psi h\rangle_{H^2}.$
Since $\psi$ is outer, we have $\overline{\psi H^2} = H^2$ and hence $\widetilde{f} \equiv 0$. It remains to check that $p \equiv 0$. Since $\operatorname{gcd}(a_1, r) = 1$, apply the Euclidean algorithm to the ring of polynomials to obtain polynomials $q$ and $s$ such that 
$q r = a_1 s + 1.$
Now apply \eqref{1029384746} to $g = p q \in \MM(\overline{a_1})$ (recall that $\MM(\overline{a_1})$ contains the polynomials). Then 
$$0 = \langle p, \psi p q\rangle = \langle p, a_1 p q \widetilde{\psi} + p q r\rangle.$$  But note that
$$p q r = p (a_1 s + 1) = p a_1 s + p$$ and $p \perp a_1pq\widetilde\psi+a_1ps$. Hence,
$0 = \langle p, p \rangle_{H^2}$ which yields $p \equiv 0$. 

(a) $\Rightarrow $ (b): Assume that $\psi \MM(\overline{a_1})$ is dense in $\MM(\overline{a_1})$. 
Since $\MM(\overline{a_1})$ embeds
boundedly into $H^2$
and polynomials belong to
$\MM(\overline{a_1})$ we have
$$
\C[z] \subset \MM(\overline{a_1})
=\overline{\psi \MM(\overline{a_1})}^{\MM(\overline{a_1})}
\subset \overline{\psi \MM(\overline{a_1})}^{H^2}
\subset \overline{\psi H^2}^{H^2}
$$
and so $\overline{\psi H^2}
=H^2$.
By Beurling's theorem \cite{Duren}, $\psi$ is outer. Now we need to check that $\operatorname{gcd}(a_1, r) = 1$. It follows from \eqref{oiuyxdftcvgyvghK} (see \cite[Cor.~4.11]{MR3850543} for the details) that the radial limit of every function in $\MM(\overline{a_1})$ exists at each $\xi_j$ (the zeros of the polynomial $a_1$) and 
$$|f(\xi_j)| \leq C_j \|f\|, \quad f \in \MM(\overline{a_1}).$$ Using our assumption again that $\overline{\psi \MM(\overline{a_1})}  = \MM(\overline{a_1})$ we produce  a sequence $g_n \MM(\overline{a_1})$ such that $\|1 - \psi g_n\| \to 0$ as $n \to \infty$. If $\psi(\xi_j) = 0$ then 
$1 \leq C_j \|1 - \psi p_n\| \to 0$ which is an obvious contradiction. Thus $\psi(\xi_j) \not = 0$ for every $1 \leq j \leq n$. From the representation $\psi = a_1 \widetilde{\psi} + r$ we see that $r(\xi_j) \not = 0$ for all $j$. This means that the zeros of $r$ do not meet the zeros of $a_1$ and so $\operatorname{gcd}(a_1, r) = 1$. 
\end{proof}

\section{$\HH(b)$ is contained in its Smirnov class}

In this section, we give the proof of Theorem \ref{Thm:hardy-factorization} and Theorem \ref{maintheorem}.

\subsection*{Proof of Theorem \ref{Thm:hardy-factorization}}
For $h\in H^2$, set 
\begin{equation}\label{9976654}
 F(z)= 2 \, p(z)\int_{-\pi}^{\pi}\frac{e^{it}}{e^{it}-z}\,
 \frac{|p(e^{it})|}{p(e^{it})}|h(e^{it})|\frac{dt}{2 \pi}, \quad z \in \D.
\end{equation}
A partial fraction decomposition with respect to the variable $e^{it}$ yields
\begin{equation}\label{bhh}
 \frac{e^{it}p(z)}{(e^{it}-z)p(e^{it})}=\frac{z}{e^{it}-z}+
 \sum_{k=1}^n\sum_{l=1}^{m_k}\frac{p_{k,l}(z)}{(\xi_k-e^{it})^{l}},
\end{equation}
where $p_{k, l}(z)$ are polynomials.




Since $2z=e^{it}+z-(e^{it}-z)$, and with the above decomposition in mind, we get
\begin{align*}
F(z) &= \int_{-\pi}^{\pi}\frac{e^{it}+z}{e^{it}-z}\, |p(e^{it})h(e^{it})|\frac{dt}{2 \pi}
 -\int_{-\pi}^{\pi} |p(e^{it})h(e^{it})|\frac{dt}{2 \pi}\\ 
 & \quad + 2 \sum_{k=1}^n\sum_{l=1}^{m_k}{p_{k,l}(z)}\, \int_{-\pi}^{\pi}
 \frac{|p(e^{it})|}{(\xi_k-e^{it})^{l}} |h(e^{it})|\frac{dt}{2 \pi}.
\end{align*}

Note that $|p(e^{it})|/|\xi_k-e^{it}|^{l}$ is bounded, and so all the integrals appearing
in the above double sum are well defined coefficients. As a result we can write
\[
 F(z)=\underbrace{\int_{-\pi}^{\pi}\frac{e^{it}+z}{e^{it}-z} 
  |p(e^{it})h(e^{it})|\frac{dt}{2 \pi}}_{F_0(z)}+q(z),
\]
where $q$ is a polynomial. 
Hence
$
 F=F_0+q.
$
Since
$$\Re\Big(\frac{e^{i t} + z}{e^{i t} - z}\Big)= \frac{1 - |z|^2}{|e^{i t} - z|^2} > 0,$$
$F_0$ has positive real part.
We also let $Q=1-q$ so that
\[
 F+Q=F_0+q+Q=F_0+1.
\]
Setting
\[
 \alpha=\frac{Qh}{F_0+1}, \quad \beta=\frac{F}{F_0+1},
\]
we get
\[
 \frac{\alpha}{1-\beta}=\frac{Qh/(F_0+1)}{1-F/(F_0+1)}=\frac{qh}{F_0+1-F}=\frac{Qh}{Q}=h.
\] 
It remains to show that 
$$\frac{\alpha}{1 - \beta}$$ is of the form \eqref{eq:Thm:hardy-factorization}.

Since $\Re F_0>0$ it follows that  $|F_0+1|>1$. Since $Q$ is a polynomial and $h\in H^2$,
we have $\alpha\in H^2$.  For the boundedness of $p \alpha$ it suffices to check that $p h/(F_0+1)$ is bounded.
From the preceding discussions it is clear that $\Re F_0$ is the Poisson extension
of $|ph|$, and so
\[
 \frac{|ph|}{|F_0+1|}\le \frac{|ph|}{\Re F_0+1}=\frac{|ph|}{|ph|+1}\le 1.
\]
The function $\beta$ is bounded by the following argument
\[
 |\beta|=\frac{|F|}{|F_0+1|}=\frac{|F_0+ p|}{|F_0+1|}
\le \frac{|F_0|}{|F_0+1|}+\frac{|p|}{|F_0+1|},
\]
and since $\Re F_0>0$ both terms are bounded. From \eqref{9976654} the function $F$ is $p$ times the Cauchy transform of an $L^2$ function and thus $F = p g$, where $g \in H^2$. Hence 
$$\beta = \frac{p g}{F_0 + 1} \in p H^2.$$
Next we need to verify that 
\[
 1-\beta=1-F/(F_0+1)=\frac{F_0+1-F}{F_0+1}=\frac{Q}{F_0+1}
\]
is outer. 
This is equivalent to saying that $q$ does not vanish on $\D$.
Since $Q=1-q$, and the coefficients in $p$ are homogeneous in $h$, we can apply a 
rescaling argument on $h$ to make $p$ small and hence $Q$ is zero-free in $\D$.
Thus we have checked that $h = \alpha/(1 - \beta)$ satisfies the properties of \eqref{eq:Thm:hardy-factorization}.

\subsection*{Proof of Theorem \ref{maintheorem}}
Observe that we have reduced this to proving Theorem \ref{bbvFFd}.

Since 
$\mathscr{P}_{N - 1} \subset \MM(\overline{a_1}) \cap H^{\infty} = \Mult(\MM(\overline{a_1}))$ and since $1 \cdot\MM(\overline{a_1})$ is trivially dense in $\MM(\overline{a_1})$, then $p   = p/1 \in \NN^{+}(\MM(\overline{a_1}))$ and the polynomial part of $\MM(\overline{a_1})$ can be written as required.. Thus by Lemma \ref{bbBBbv} we just need to show that 
$$a_1 H^2 \subset \NN^{+}(\MM(\overline{a_1})).$$
By Proposition \ref{lkjhgoiuyt} this is equivalent to showing that $a_1 H^2$ is a subset of the class of functions $$\frac{a_1 f_1 + r_1}{a_1 f_2 + r_2},$$
where  $f_1, f_2 \in H^{2}$ with $a_1 f_1, a_1 f_1 \in H^{\infty}$, $ r_1, r_2 \in \mathscr{P}_{N - 1}$, $a_1 f_2  + r_2$ is outer, and $\operatorname{gcd}(a_1, r_2) = 1$. But this follows from Theorem \ref{Thm:hardy-factorization}. Note that we can choose $r_1=0$ and $r_2=1$.


We end with the following generalization of Theorem \ref{maintheorem}.

\begin{Corollary}\label{outerrrrr}
 If $b \in H^{\infty}_{1}$ is rational and outer and $r > 0$ then $\HH(b^r) \subset \NN^{+}(\HH(b^r))$.
\end{Corollary}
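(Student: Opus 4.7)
The approach is to reduce the corollary to Theorem~\ref{bbvFFd} by showing that, for rational outer $b$ and $r>0$, the space $\HH(b^r)$ coincides, as a set with an equivalent norm, with the same space $\MM(\overline{a_1})$ that Proposition~\ref{678uhnjk} attaches to $b$ itself. Lemma~\ref{poiuytrfdcvbhtr5678ujh} then yields $\NN^{+}(\HH(b^r))=\NN^{+}(\MM(\overline{a_1}))$, and Theorem~\ref{bbvFFd} completes the argument.

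Let $a$ denote the Pythagorean mate of $b$, $\xi_1,\dots,\xi_n$ the zeros of $a$ on $\T$ with respective multiplicities $m_1,\dots,m_n$, and $a_1(z)=\prod_{j=1}^n(z-\xi_j)^{m_j}$ as in \eqref{nmnbhjhjk898}. Factor $a=a_1g$ with $g$ a rational outer function that is zero-free on $\overline{\D}$, so that $g^{\pm1}\in H^\infty$. Since $b$ is outer (hence nowhere zero on $\D$), $b^r:=\exp(r\log b)$ is a well-defined outer function in $H^\infty_1$. Near each $\xi_j$ the Taylor expansion
\[
1-|b|^{2r}=r(1-|b|^{2})\bigl(1+O(1-|b|^{2})\bigr)
\]
shows that $b^r$ is non-extreme (so its Pythagorean mate $a_r$ exists and is outer) and that the quotient $(1-|b|^{2r})/(1-|b|^{2})$ is continuous on $\T$ with strictly positive finite bounds. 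Hence $|a_r/a|$, and therefore $|a_r/a_1|$, is bounded above and below on $\T$; since $a_r$ and $a_1$ are both outer, the quotient $g_r:=a_r/a_1$ lies in $H^\infty$ along with its reciprocal.

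The co-analytic Toeplitz composition $T_{\overline{a_r}}=T_{\overline{a_1}}T_{\overline{g_r}}$, together with the $H^2$-invertibility of $T_{\overline{g_r}}$ (which follows from $g_r^{\pm1}\in H^\infty$), yields $\MM(\overline{a_r})=T_{\overline{a_1}}H^2=\MM(\overline{a_1})$ as sets, hence with equivalent norms by the closed graph theorem. It remains to identify $\HH(b^r)$ with $\MM(\overline{a_r})$. I would establish this by verifying that $(a_r,b^r)$ is a corona pair on $\D$, combining $|b^r|=|b|^r$ and $|a_r|\asymp|a|$ on $\D$ with the corona property of $(a,b)$ stated in the Pythagorean-pair proposition preceding Proposition~\ref{678uhnjk}, and then invoking the standard corona-based identification $\HH(b^r)=\MM(\overline{a_r})$ from the general $\HH(b)$ theory (see \cite{Sa, FM1, FM2}). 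Chaining these equalities gives $\HH(b^r)=\MM(\overline{a_1})$ with equivalent norms, and the reduction above then concludes the proof.

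The main obstacle is the identification $\HH(b^r)=\MM(\overline{a_r})$: Proposition~\ref{678uhnjk} handles this for rational $b$ directly, but since $b^r$ is not rational for non-integer $r$, one has to appeal instead to a corona-based version of the identification valid in the general $\HH(b)$ setting. Once the Taylor comparison and the pointwise bound $|a_r|\asymp|a|$ on $\D$ are in hand, both the corona estimate for $(a_r,b^r)$ and the equality $\MM(\overline{a_r})=\MM(\overline{a_1})$ are routine.
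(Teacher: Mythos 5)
Your proposal is correct, but it takes a genuinely different route from the paper. The paper's proof is a one-line citation: by \cite[Thm.~1.6]{MR3503356} one has $\HH(b^r)=\HH(b)$ as sets for outer $b$, and then Theorem \ref{maintheorem} together with Lemma \ref{poiuytrfdcvbhtr5678ujh} finishes the argument. What you do instead is essentially re-prove that cited set equality from scratch: you compare the Pythagorean mates via $1-|b|^{2r}\asymp 1-|b|^2$ on $\T$ (so $a_r/a_1$ is invertible in $H^\infty$ and $\MM(\overline{a_r})=\MM(\overline{a_1})$ through the factorization $T_{\overline{a_r}}=T_{\overline{a_1}}T_{\overline{g_r}}$), check that $(a_r,b^r)$ inherits the corona property from $(a,b)$, and then invoke the standard implication ``corona pair $\Rightarrow$ $\HH(b^r)=\MM(\overline{a_r})$'' (which indeed only needs the easy direction: from $a_ru+b^rv=1$ and Sarason's criterion $T_{\overline{b^r}}f=T_{\overline{a_r}}f^+$ one gets $f=T_{\overline{a_r}}(T_{\overline{u}}f+T_{\overline{v}}f^+)$, while $\MM(\overline{a_r})\subset\HH(b^r)$ always holds for non-extreme symbols). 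All the individual steps check out, including the boundedness above and below of $(1-x^r)/(1-x)$ on $[0,1]$, which is exactly what makes $b^r$ non-extreme and the mates comparable. The trade-off: the paper's argument is shorter but opaque, resting entirely on an external theorem; yours is longer but self-contained modulo standard $\HH(b)$ theory, and it makes the mechanism visible --- the only thing that matters is the boundary comparison of $1-|b^r|^2$ with $1-|b|^2$, which is why the same argument would extend to other superpositions of $b$ with comparable boundary defect. The only (trivial) case you silently exclude is $b$ a unimodular constant, where $\HH(b^r)=\{0\}$ and the statement is vacuous.
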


\begin{proof}
Note that $\HH(b^r) = \HH(b)$ \cite[Thm.~1.6]{MR3503356} and now use Theorem \ref{maintheorem} along with Lemma \ref{poiuytrfdcvbhtr5678ujh}.
\end{proof}

\begin{Remark}
Note that Theorem \ref{bbvFFd} is  true for $\mathcal M(\bar \psi)$, where $\psi=a_1\varphi$ and $\varphi$ is any $H^\infty$ outer function that is invertible in $H^\infty$.
\end{Remark}

\bibliographystyle{plain}

\bibliography{references}

\begin{thebibliography}{10}

\bibitem{MR3687947}
Alexandru Aleman, Michael Hartz, John~E. McCarthy, and Stefan Richter.
\newblock The {S}mirnov class for spaces with the complete {P}ick property.
\newblock {\em J. Lond. Math. Soc. (2)}, 96(1):228--242, 2017.

\bibitem{MR4096723}
Cheng Chu.
\newblock Which de {B}ranges-{R}ovnyak spaces have complete {N}evanlinna-{P}ick
  property?
\newblock {\em J. Funct. Anal.}, 279(6):108608, 15, 2020.

\bibitem{MR3110499}
Constantin Costara and Thomas Ransford.
\newblock Which de {B}ranges-{R}ovnyak spaces are {D}irichlet spaces (and vice
  versa)?
\newblock {\em J. Funct. Anal.}, 265(12):3204--3218, 2013.

\bibitem{MR1098860}
B.~Mark Davis and John~E. McCarthy.
\newblock Multipliers of de {B}ranges spaces.
\newblock {\em Michigan Math. J.}, 38(2):225--240, 1991.

\bibitem{Duren}
P.~L. Duren.
\newblock {\em Theory of ${H}\sp{p}$ spaces}.
\newblock Academic Press, New York, 1970.

\bibitem{MR3503356}
Emmanuel Fricain, Andreas Hartmann, and William~T. Ross.
\newblock Concrete examples of {$\mathscr{H}(b)$} spaces.
\newblock {\em Comput. Methods Funct. Theory}, 16(2):287--306, 2016.

\bibitem{MR3720929}
Emmanuel Fricain, Andreas Hartmann, and William~T. Ross.
\newblock Multipliers between model spaces.
\newblock {\em Studia Math.}, 240(2):177--191, 2018.

\bibitem{MR3850543}
Emmanuel Fricain, Andreas Hartmann, and William~T. Ross.
\newblock Range spaces of co-analytic {T}oeplitz operators.
\newblock {\em Canad. J. Math.}, 70(6):1261--1283, 2018.

\bibitem{MR3967886}
Emmanuel Fricain, Andreas Hartmann, and William~T. Ross.
\newblock Multipliers between range spaces of co-analytic {T}oeplitz operators.
\newblock {\em Acta Sci. Math. (Szeged)}, 85(1-2):215--230, 2019.

\bibitem{FM1}
Emmanuel Fricain and Javad Mashreghi.
\newblock {\em The theory of {$\mathcal{H}(b)$} spaces. {V}ol. 1}, volume~20 of
  {\em New Mathematical Monographs}.
\newblock Cambridge University Press, Cambridge, 2016.

\bibitem{FM2}
Emmanuel Fricain and Javad Mashreghi.
\newblock {\em The theory of {$\mathcal{H}(b)$} spaces. {V}ol. 2}, volume~21 of
  {\em New Mathematical Monographs}.
\newblock Cambridge University Press, Cambridge, 2016.

\bibitem{MR1254125}
B.~A. Lotto and Donald Sarason.
\newblock Multipliers of de {B}ranges-{R}ovnyak spaces.
\newblock {\em Indiana Univ. Math. J.}, 42(3):907--920, 1993.

\bibitem{MR1614726}
Benjamin~A. Lotto and Donald Sarason.
\newblock Multipliers of de {B}ranges-{R}ovnyak spaces. {II}.
\newblock In {\em Harmonic analysis and hypergroups ({D}elhi, 1995)}, Trends
  Math., pages 51--58. Birkh\"auser Boston, Boston, MA, 1998.

\bibitem{Sa}
D.~Sarason.
\newblock {\em Sub-{H}ardy {H}ilbert spaces in the unit disk}.
\newblock University of Arkansas Lecture Notes in the Mathematical Sciences,
  10. John Wiley \& Sons Inc., New York, 1994.
\newblock A Wiley-Interscience Publication.

\bibitem{MR1911187}
Serguei Shimorin.
\newblock Complete {N}evanlinna-{P}ick property of {D}irichlet-type spaces.
\newblock {\em J. Funct. Anal.}, 191(2):276--296, 2002.

\end{thebibliography}

\end{document}